\definecolor{webgreen}{rgb}{0,.5,0}
\definecolor{webbrown}{rgb}{.6,0,0}
\begin{document}

\theoremstyle{plain}
\newtheorem{theorem}{Theorem}
\newtheorem{corollary}[theorem]{Corollary}
\newtheorem{lemma}[theorem]{Lemma}
\newtheorem{proposition}[theorem]{Proposition}

\theoremstyle{definition}
\newtheorem{definition}[theorem]{Definition}
\newtheorem{example}[theorem]{Example}
\newtheorem{conjecture}[theorem]{Conjecture}

\theoremstyle{remark}
\newtheorem{remark}[theorem]{Remark}

\begin{center}
\vskip 1cm{\LARGE\bf A Note on the Gessel Numbers
\vskip 1cm}
\large
Jovan Miki\'{c}\\
University of Banja Luka\\
Faculty of Technology\\
Bosnia and Herzegovina\\
\href{mailto:jovan.mikic@tf.unibl.org}{\tt jovan.mikic@tf.unibl.org} \\
\end{center}

\vskip .2in

\begin{abstract}
The Gessel number $P(n,r)$ represents the number of lattice paths in a plane with unit horizontal and vertical steps from $(0,0)$ to $(n+r, n+r-1)$ that never touch any of the points from the set $\{(x,x) \in \mathbb{Z}^2: x \geq r\}$. In this paper, we use combinatorial arguments to derive  a recurrence  relation between $P(n,r)$ and $P(n-1,r+1)$. Also, we give a new proof for a well-known closed formula for $P(n,r)$. Moreover, a new combinatorial interpretation for the Gessel numbers is presented.

\end{abstract}

\noindent\emph{ \textbf{Keywords:}} Gessel numbers, Catalan numbers, central binomial coefficient, lattice paths.

\noindent \textbf{2020} {\it \textbf{Mathematics Subject Classification}}:
Primary 05A10; Secondary 11B65.

\section{Introduction}\label{l:1}

Let $n$ be a non-negative integer and let $r$ be a positive integer.
The Gessel number $P(n,r)$  counts all lattice paths  \cite[p.\ 191]{IG} in plane with $(1,0)$ and $(0,1)$ steps from $(0, 0)$ to $(n+r, n+r-1)$ that never touch 
any of the points from the set $\{(x,x) \in \mathbb{Z}^2: x \geq r\}$.

By using a combinatorial argument and an instance of the Pfaff-Saalsch\"{u}tz theorem, Gessel proved that (see \cite[p.\ 191]{IG})
\begin{equation}\label{eq:1}
P(n,r)=\frac{r}{2(n+r)}\binom{2n}{n}\binom{2r}{r}\text{.}
\end{equation}

Let $C_n=\frac{1}{n+1}\binom{2n}{n}$ denote the $n$th Catalan number. The Catalan numbers are well studied in the literature (e.g.\ see \cite{TK, RS1}) and there are many counting problems in combinatorics whose solutions are given by the Catalan numbers. For example, the Catalan number $C_n$ is the number of all paths in a  plane from $(0,0)$ to $(n,n)$ with $(1,0)$ and $(0,1)$ steps such that they never rise above
the line $y=x$ (see \cite[Example 9.1]{TK}, \cite[Problem 158]{RS}, and \cite[Eq.~(10.11)]{CK}).

By setting $r=1$ in Equation~(\ref{eq:1}), it follows that $P(n,1)=C_n$.

Recently, it is shown \cite[Cor.\ 4]{JM1} that the sum $\sum_{k=0}^{2n}(-1)^k\binom{2n}{k}^m C_k C_{2n-k}$ is divisible by $\binom{2n}{n}$ for all non-negative integers $n$ and for all positive integers $m$. The Gessel numbers play an interesing role in this proof \cite[Eq.~(68)]{JM1}.

It is known \cite[p.\ 191]{IG} that, for a fixed positive integer $r$, the smallest positive integer $K_r$ such that $\displaystyle{\frac{K_r}{n+r}\binom{2n}{n}}$ is an integer for every $n$ is $\displaystyle{\frac{r}{2}\binom{2r}{r}}$. Guo gave a generalization of  the Gessel numbers \cite[Eq.~(1.10)]{VG2}.

In this paper, we study the Gessel numbers by establishing a close relationship between the Gessel and  the Catalan numbers. After we establish such a relationship, we present a new combinatorial interpretation for the Gessel Numbers.  This combinatorial interpretation is contained in our main result,  Theorem~\ref{t:1}.

\section{Main results}
Our first result expresses $P(n,r)$ in terms of the Catalan numbers.
 
\begin{proposition}\label{lm:1}
Let $n$ be a non-negative integer and let $r$ be a positive integer. Then
\begin{equation}\label{eq:2}
P(n,r)=\sum_{k=0}^{r-1}\binom{2k}{k}C_{n+r-k-1}\text{.}
\end{equation}
\end{proposition}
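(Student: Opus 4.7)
The plan is to prove Proposition~\ref{lm:1} by a direct combinatorial decomposition: classify each Gessel path according to the position of its last visit to the main diagonal $y=x$.

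First I would observe that every Gessel path begins at $(0,0)$, which lies on the diagonal, and ends at $(n+r,n+r-1)$, which lies strictly below it. Therefore the set of indices $x$ with $(x,x)$ on the path is a nonempty subset of $\{0,1,\dots\}$, and by the defining restriction it is contained in $\{0,1,\dots,r-1\}$. Let $k$ denote the largest such index; then $0 \le k \le r-1$. I would split $P(n,r)$ as a sum over $k$ of the number of Gessel paths whose last diagonal contact is $(k,k)$, aiming to show that this count equals $\binom{2k}{k}\,C_{n+r-k-1}$.

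For the initial segment from $(0,0)$ to $(k,k)$, there is no restriction (any contact with the diagonal strictly before the last one is allowed, and $k \le r-1 < r$, so no forbidden point is involved), giving $\binom{2k}{k}$ choices. For the terminal segment from $(k,k)$ to $(n+r,n+r-1)$, the path must avoid the entire diagonal after $(k,k)$. Since the path cannot cross $y=x$ without touching it, and the endpoint lies below $y=x$, the path must remain strictly below the diagonal throughout this segment. In particular, the very first step after $(k,k)$ must be horizontal, to $(k+1,k)$. Translating the picture by $(x,y)\mapsto (x-k-1,y-k)$ sends this segment to a lattice path from $(0,0)$ to $(n+r-k-1,n+r-k-1)$, and the condition $y<x$ becomes $y'\le x'$. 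Such paths are counted by $C_{n+r-k-1}$, which gives the desired factor.

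Multiplying and summing over $k\in\{0,1,\dots,r-1\}$ yields the identity. The main technical point to verify carefully is the translation argument in the previous paragraph---specifically that the first step after the last diagonal contact must be horizontal and that the strict inequality $y<x$ translates into the weak Catalan inequality $y'\le x'$. Everything else is just bookkeeping, so I expect no genuine obstacle: the identity is essentially a last-return decomposition analogous to the classical first-return decomposition for Catalan numbers.
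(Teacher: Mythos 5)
Your proposal is correct and follows essentially the same route as the paper: a last-return decomposition at the final diagonal contact $(k,k)$ with $0\le k\le r-1$, giving $\binom{2k}{k}$ unrestricted initial segments and $C_{n+r-k-1}$ strictly-subdiagonal terminal segments. The paper's proof is identical in structure, including the observation that the step after $(k,k)$ goes to $(k+1,k)$ and that the remaining segment is a Catalan path below the line $y=x-1$.
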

\begin{proof}
Let $k$ be a non-negative integer, and let $p$ be a  path from $(0,0)$ to $(n+r,n+r-1)$, with $(1,0)$ and $(0,1)$ steps, that never touches 
any of the points from the set $\{(x,x) \in \mathbb{Z}^2: x \geq r\}$. Assume that the last point point of intersection of $p$ and the line $y=x$, when taking the path $p$ from left to right, is the point $(k,k)$, where $0 \leq k \leq r-1$.

The number of ``permitted" paths from $(0,0)$ to $(k,k)$ is $\binom{2k}{k}$, since every such path does not contain ``forbidden" points. Every such path passes through the point $(k+1,k)$, as $(k,k)$ is the last intersection point between this path and the line $y=x$. Note that the segment that connects points $(k+1,k)$ and $(n+r,n+r-1)$ is parallel
with the line $y=x$. 

The number of ``permitted" paths from $(k+1,k)$ to $(n+r, n+r-1)$  is the same as the number of all paths from  $(k+1,k)$ to $(n+r, n+r-1)$ with $(1, 0)$ and $(0,1)$ steps  that never rise above the line $y=x-1$. It follows that there are $C_{n+r-k-1}$ such paths.

Therefore, the number of all paths whose last  intersection with the line $y=x$ is the point $(k,k)$  is $\binom{2k}{k} C_{n+r-k-1}$. Since $k$ takes values  from $0$ to $r-1$, we obtain Equation~(\ref{eq:2}). 
\end{proof}

We use the previous proposition to  obtain a recurrence relation between $P(n-1,r+1)$  and $P(n,r)$. 
\begin{theorem}\label{Cor:1}
Let $n$ and $r$ be positive integers. Then
\begin{equation}\label{eq:3}
P(n-1,r+1)-P(n,r)=\binom{2r}{r}C_{n-1}.
\end{equation}
\end{theorem}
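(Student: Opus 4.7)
The plan is to derive the recurrence directly from Proposition~\ref{lm:1}, since the right-hand side $\binom{2r}{r} C_{n-1}$ is exactly the $k = r$ term of the sum that expresses $P(n-1, r+1)$ by that proposition.

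First I would apply Proposition~\ref{lm:1} to both sides. For $P(n,r)$, the formula gives a sum over $k = 0, 1, \ldots, r-1$ of $\binom{2k}{k} C_{n+r-k-1}$. For $P(n-1, r+1)$, the index $n$ is replaced by $n-1$ and $r$ is replaced by $r+1$, so the summation runs over $k = 0, 1, \ldots, r$, and the Catalan subscript becomes $(n-1) + (r+1) - k - 1 = n + r - k - 1$, which is \emph{the same} as in the expression for $P(n,r)$. This alignment of subscripts is the key observation that makes the argument work without any further manipulation.

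Next I would subtract the two sums termwise. The sums agree on the range $k = 0, 1, \ldots, r-1$, so everything cancels there, and only the $k = r$ term of $P(n-1, r+1)$ survives. That term is $\binom{2r}{r} C_{n+r-r-1} = \binom{2r}{r} C_{n-1}$, which is precisely the desired right-hand side of Equation~(\ref{eq:3}).

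There is essentially no obstacle here: once Proposition~\ref{lm:1} is in place, the recurrence is a one-line consequence of the fact that changing $(n,r)$ to $(n-1, r+1)$ extends the summation range by one extra term while leaving the summand unchanged. The only thing to double-check is the reindexing of the Catalan subscript, which is routine.
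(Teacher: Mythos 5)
Your proof is correct, but it takes a different route from the paper's. You treat the theorem as a purely formal consequence of Proposition~\ref{lm:1}: substituting $(n-1,r+1)$ for $(n,r)$ in Equation~(\ref{eq:2}) extends the summation range from $k\le r-1$ to $k\le r$ while leaving the summand $\binom{2k}{k}C_{n+r-k-1}$ unchanged, so the difference is exactly the $k=r$ term $\binom{2r}{r}C_{n-1}$; the index check is right, and the hypothesis $n\ge 1$ ensures $P(n-1,r+1)$ is covered by the proposition. The paper instead gives a self-contained combinatorial argument: it observes that both quantities count paths to $(n+r,n+r-1)$ avoiding the diagonal beyond $x\ge r+1$ and $x\ge r$ respectively, so the difference counts paths whose last intersection with $y=x$ is $(r,r)$, and then counts these directly as $\binom{2r}{r}\cdot C_{n-1}$ (permitted prefix to $(r,r)$ times a Catalan-type suffix). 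The two arguments are morally the same --- your surviving $k=r$ term is precisely the paper's class of paths with last diagonal touch at $(r,r)$ --- but yours is shorter and makes the theorem an immediate corollary of the proposition, while the paper's version keeps the combinatorial meaning of the difference explicit and does not depend on the closed-form sum. Either is acceptable; yours arguably better reflects the paper's own remark that the theorem follows from the preceding proposition.
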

\begin{proof}
The number $P(n-1,r+1)$ counts all paths from $(0,0)$ to $(n+r,n+r-1)$, with $(1,0)$ and $(0,1)$ steps, that never touch 
any of the points from the set $\{(x,x) \in \mathbb{Z}^2: x \geq r+1\}$. 

Therefore, the number $ P(n-1,r+1)-P(n,r)$ counts all paths from $(0,0)$ to $(n+r,n+r-1)$, with $(1,0)$ and $(0,1)$ steps, whose last point of intersection with the line $y=x$ is the point $(r,r)$. 

The number of ``permitted" paths from $(0,0)$ to $(r,r)$ is $\binom{2r}{r}$, since every such path does not contain ``forbidden" points. 

After $(r,r)$, every such  path must pass through $(r+1,r)$. Note that the segment that connects points $(r+1,r)$ and $(n+r,n+r-1)$ is parallel
with the line $y=x$. The number of ``permitted" paths from $(r+1,r)$ to $(n+r, n+r-1)$  is the same as the number of all paths from  $(r+1,r)$ to $(n+r, n+r-1)$, with $(1, 0)$ and $(0,1)$ steps, that never rise above the line $y=x-1$. By a well-known  \cite[Eq.~(10.11)]{CK}) property of the Catalan numbers, it follows that there are $C_{n-1}$ such paths.

Therefore,  the number of all paths whose last point of intersection with the line $y=x$ is the point $(r,r)$ is $\binom{2r}{r} C_{n-1}$.
\end{proof}


By using recurrence relation (\ref{eq:3}) and induction on $n$, we give a proof of Equation~(\ref{eq:1}). 

Let $S(n,r)$ denote $\displaystyle{\frac{r}{2(n+r)}\binom{2n}{n}\binom{2r}{r}}$. We will show that $P(n,r)=S(n,r)$ for all non-negative integers $n$ and for all positive integers $r$.  We use induction on $n$. 


For $n=0$, since the ``final" point $(r,r-1)$ is below the first ``forbidden" point $(r,r)$, the Gessel number $P(0,r)$ counts all paths in a plane from $(0,0)$ to $(r,r-1)$ with $(1,0)$ and $(0,1)$ steps  without any restrictions. Hence, $\displaystyle{P(0,r)=\binom{2r-1}{r}}$ or $\displaystyle{P(0,r)=\frac{1}{2}\binom{2r}{r}}$.

Therefore, it follows that $P(0,r)=S(0,r)$ for all positive integers $r$. 

Let us assume that $P(n-1,r)=S(n-1,r)$ for some positive integer $n$ and for all positive integers $r$.

We use a well-known \cite[p.\ 26]{TK} recurrence relation for the central binomial coefficient:
\begin{equation}\label{eq:6}
\binom{2(r+1)}{r+1}=\frac{2(2r+1)}{r+1}\binom{2r}{r}\text{.}
\end{equation}

Then we have that the following equalities hold:
\begin{align*}
P(n,r)&=P(n-1,r+1)-\binom{2r}{r}C_{n-1}&&(\text{by  Equation~(\ref{eq:3})})\\
&=S(n-1,r+1)-\binom{2r}{r}C_{n-1}&& (\text{by the induction hypothesis})\\
&=\frac{r+1}{2(n+r)}\binom{2(n-1)}{n-1}\binom{2(r+1)}{r+1}-\binom{2r}{r}C_{n-1}\\
\phantom{aaa}&=\frac{r+1}{2(n+r)}\binom{2(n-1)}{n-1}\frac{2(2r+1)}{r+1}\binom{2r}{r}-\binom{2r}{r}C_{n-1}&&(\text{by Equation~(\ref{eq:6})})\\
\end{align*}
\begin{align*}
&=\frac{2r+1}{n+r}\binom{2(n-1)}{n-1}\binom{2r}{r}-\frac{1}{n}\binom{2r}{r}\binom{2(n-1)}{n-1}\\
&=\binom{2(n-1)}{n-1}\binom{2r}{r}\bigl{(}\frac{2r+1}{n+r}-\frac{1}{n}\bigr{)}\\
&=\binom{2(n-1)}{n-1}\binom{2r}{r}\frac{r(2n-1)}{n(n+r)}\\
&=\frac{r}{2(n+r)}\binom{2r}{r}\bigl{(}\frac{2(2n-1)}{n}\binom{2(n-1)}{n-1}\bigr{)}\\
&=\frac{r}{2(n+r)}\binom{2r}{r}\binom{2n}{n} &&(\text{by Equation~(\ref{eq:6})})\\
&=S(n,r)\text{.}\\
\end{align*}
This completes our proof by induction. \vspace{4mm}

\begin{definition}\label{def:1}
Let $n$ be a non-negative integer and let $r$ be a positive integer. The number $Q(n,r)$ counts all lattice paths in a plane with $(1,0)$ and $(0,1)$ steps from $(0, 0)$ to $(n+r, n+r-1)$ that never touch 
any of the points from the set $\{(x,x) \in \mathbb{Z}^2: 1 \leq x \leq r  \}$.
\end{definition}


\begin{proposition}\label{lm:2}
Let $n$ and $r$ be positive integers. Then
\begin{equation}\label{eq:4}
Q(n,r)=\sum_{k=1}^{n}C_{r+k-1}\binom{2(n-k)}{n-k}\text{.}
\end{equation}
\end{proposition}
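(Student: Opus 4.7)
The plan is to mirror the proof of Proposition~\ref{lm:1} by classifying each path $p$ counted by $Q(n,r)$ according to its first intersection with the diagonal $y=x$ after the origin. Because the endpoint $(n+r,n+r-1)$ lies strictly below this diagonal, two mutually exclusive cases arise, which I would treat in turn.

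First, suppose $p$ never revisits the diagonal after $(0,0)$. Its initial step must then be $(1,0)$, since an initial upward step would force a later diagonal crossing on the way down to the endpoint. The remainder of $p$ stays strictly below $y=x$ en route from $(1,0)$ to $(n+r,n+r-1)$, and by the same translation used in Proposition~\ref{lm:1} this portion is enumerated by the Catalan number $C_{n+r-1}$. Second, suppose the first reintersection occurs at a point $(t,t)$. The avoidance of $(1,1),\dots,(r,r)$ forces $t \geq r+1$, while $t \leq n+r-1$ is needed to reach the endpoint. The portion of $p$ from $(0,0)$ to $(t,t)$ is a primitive Dyck path, strictly above or strictly below $y=x$ in its interior, and each of the two orientations is enumerated by $C_{t-1}$. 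The remainder from $(t,t)$ to $(n+r,n+r-1)$ is completely unrestricted, since any later diagonal point $(x,x)$ it might meet must satisfy $x \geq t+1 > r$; thus this portion contributes $\binom{2(n+r-t)-1}{n+r-t}$ completions. Summing over both cases yields
\begin{equation*}
Q(n,r) \;=\; C_{n+r-1} \;+\; \sum_{t=r+1}^{n+r-1} 2\, C_{t-1}\,\binom{2(n+r-t)-1}{n+r-t}.
\end{equation*}

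To bring this into the form of Equation~(\ref{eq:4}), I would reindex via $k = t - r$, so that $k$ runs from $1$ to $n-1$, and apply the elementary identity $2\binom{2m-1}{m} = \binom{2m}{m}$ (a direct consequence of $\binom{2m-1}{m}=\binom{2m-1}{m-1}$). The finite sum then reads $\sum_{k=1}^{n-1} C_{r+k-1}\binom{2(n-k)}{n-k}$, while the isolated term $C_{n+r-1}$ absorbs as the $k=n$ summand once we note that $\binom{0}{0}=1$. I expect the main delicate point to be verifying that the factor of $2$ arising from the two orientations of the primitive Dyck portion is exactly what is needed to convert the off-center binomial $\binom{2m-1}{m}$ into the central binomial $\binom{2m}{m}$ appearing in the stated formula, so that the two cases merge seamlessly into a single sum indexed by $k$.
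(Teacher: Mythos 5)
Your proof is correct and follows essentially the same route as the paper: the same split into paths that never revisit the diagonal versus those whose first return is at $(t,t)=(r+k,r+k)$, the same count $2C_{t-1}$ for the primitive portion and an unrestricted count for the tail, merged into a single sum via $2\binom{2m-1}{m}=\binom{2m}{m}$. The only cosmetic difference is that the paper writes the tail count directly as $\tfrac{1}{2}\binom{2(n-k)}{n-k}$ rather than as $\binom{2(n-k)-1}{n-k}$.
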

\begin{proof}

Let $k$ be a non-negative integer, and let $p$ be a  path from $(0,0)$ to $(n+r,n+r-1)$, with $(1,0)$ and $(0,1)$ steps, that never touches 
any of the points from the set $\{(x,x) \in \mathbb{Z}^2: 1 \leq x \leq r\}$. There are two cases to consider.

\textbf{The first case:}
A path $p$ intersects the line $y=x$ only at the point $(0,0)$. In this case, a path $p$ must begin with a $(1,0)$ step. Note that the segment that connects the points $(1,0)$ and $(n+r,n+r-1)$ is parallel with the line $y=x$.
The number of ``permitted" paths from $(1,0)$ to $(n+r, n+r-1)$  is the same as the number of all paths from  $(1,0)$ to $(n+r, n+r-1)$, with $(1, 0)$ and $(0,1)$ steps, that never rise above the line $y=x-1$. It follows that there are $C_{n+r-1}$ such paths.

\textbf{The second case:}
A path $p$ intersects the line $y=x$ in at least two points. 
Let $(r+k,r+k)$ be the first point of intersection between $p$ and the line $y=x$ after the point $(0,0)$. Here, $1\leq k \leq n-1$. Note that, in this case, $ n\geq 2$.

Let $m$ be a positive integer. It is readily verified that the number of all paths in a plane from $(0,0)$ to $(m,m)$, with $(1,0)$ and $(0,1)$ steps,  that intersect the line $y=x$ only at points $(0,0)$ and $(m,m)$  is $2C_{m-1}$.
Therefore, the number of ``permitted" paths from $(0,0)$ to $(r+k,r+k)$ is $2C_{r+k-1}$.

The number of ``permitted" paths from $(r+k,r+k)$ to $(n+r,n+r-1)$ is the same as the number of all paths from $(r+k,r+k)$ to $(n+r,n+r-1)$ with $(1,0)$ and $(0,1)$ steps, since every such path does not contain ``forbidden" points. 
It follows that there are $\displaystyle{\frac{1}{2}\binom{2(n-k)}{n-k}}$ such paths (see \cite[Equation~(10.3)]{CK}).

Therefore, the number of all paths that intersect the line $y=x$ at the point $(r+k,r+k)$ for the first time after the point $(0,0)$
 is $C_{r+k-1}\binom{2(n-k)}{n-k}$. Since  $k$ can take values from  $1$ to $n-1$, it follows that there are $\displaystyle{\sum_{k=1}^{n-1}C_{r+k-1}\binom{2(n-k)}{n-k}}$ such paths.

Putting ever together, it holds:
\begin{align*}
Q(n,r)&=C_{n+r-1}+\sum_{k=1}^{n-1}C_{r+k-1}\binom{2(n-k)}{n-k}\\
&=\sum_{k=1}^{n}C_{r+k-1}\binom{2(n-k)}{n-k}\text{.}
\end{align*}
\end{proof}

\begin{remark}\label{rem:2}
Note that, for $n=0$, the number $Q(0,r)$ is equal to $C_{r-1}$.
\end{remark}

We now  use Proposition~\ref{lm:1} and Proposition~\ref{lm:2} to prove our main result that gives us a new combinatorial interpretation for the Gessel numbers.

\begin{theorem}\label{t:1}
Let $n$ and $r$ be positive integers. Then 
\begin{equation}\label{eq:5}
P(n,r)=Q(r,n)\text{.}
\end{equation}
\end{theorem}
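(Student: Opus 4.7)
The plan is to prove $P(n,r)=Q(r,n)$ by direct comparison of the two closed-form sums already established in Proposition~\ref{lm:1} and Proposition~\ref{lm:2}. This is cleaner than attempting a bijection between the two sets of lattice paths, since the formulas are explicit and the equality reduces to a reindexing of one sum.

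First I would apply Proposition~\ref{lm:1} to write
\[
P(n,r)=\sum_{k=0}^{r-1}\binom{2k}{k}C_{n+r-k-1}.
\]
Next, I would apply Proposition~\ref{lm:2} with the roles of the two parameters interchanged (that is, substitute $n\mapsto r$ and $r\mapsto n$ in Equation~(\ref{eq:4})) to obtain
\[
Q(r,n)=\sum_{k=1}^{r}C_{n+k-1}\binom{2(r-k)}{r-k}.
\]

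The final step is a straightforward index substitution in this last sum. Setting $j=r-k$, so that $k=r-j$ and $k$ ranging from $1$ to $r$ corresponds to $j$ ranging from $r-1$ down to $0$, the summand $C_{n+k-1}\binom{2(r-k)}{r-k}$ becomes $C_{n+r-j-1}\binom{2j}{j}$. Therefore
\[
Q(r,n)=\sum_{j=0}^{r-1}\binom{2j}{j}C_{n+r-j-1}=P(n,r),
\]
which is exactly Equation~(\ref{eq:5}).

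There is essentially no obstacle here; the only thing to watch is that Proposition~\ref{lm:2} is stated for positive integers $n$ and $r$, so the application to $Q(r,n)$ requires $r$ and $n$ both to be positive, which matches the hypotheses of Theorem~\ref{t:1}. I would mention this briefly so that the reader sees the boundary of the two summations ($k=0$ contributing $\binom{0}{0}C_{n+r-1}$ on the $P$-side versus $k=r$ contributing $C_{n+r-1}\binom{0}{0}$ on the $Q$-side) line up correctly.
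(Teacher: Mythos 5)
Your proposal is correct and follows exactly the paper's own argument: apply Proposition~\ref{lm:1} to $P(n,r)$, apply Proposition~\ref{lm:2} with the parameters interchanged to $Q(r,n)$, and reindex the latter sum by $t=r-k$ (your $j$). Nothing to add.
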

\begin{proof}
By setting $n:=r$ and $r:=n$ in Proposition~\ref{lm:2}, it follows that
\begin{equation}\label{eq:7}
Q(r,n)=\sum_{k=1}^{r}C_{n+k-1}\binom{2(r-k)}{r-k}\text{.}
\end{equation}

By  substituting  $t$ for $r-k$, it follows that Equation~(\ref{eq:7}) becomes
\begin{equation}\label{eq:8}
Q(r,n)=\sum_{t=0}^{r-1}C_{n+r-t-1}\binom{2t}{t}\text{.}
\end{equation}

By using Proposition~\ref{lm:1} and Equation~(\ref{eq:8}), it follows that $Q(r,n)=P(n,r)$. 
\end{proof}

Theorem \ref{t:1} gives a  new combinatorial interpretation for Gessel numbers. By Theorem \ref{t:1}, the Gessel number $P(n,r)$ is the number of all lattice paths  in a plane with $(1,0)$ and $(0,1)$ steps from $(0, 0)$ to $(n+r, n+r-1)$ that never touch 
any of the points from the set  $\{(x,x) \in \mathbb{Z}^2: 1 \leq x \leq n  \}$.






\section{Concluding remarks}\label{l:6}

We end this paper with some formulas for the Gessel numbers.

Let $n$ be a non-negative integer, and let $r$ be a positive integer.
By Equation~(\ref{eq:1}), Theorem \ref{t:1}, and Remark~\ref{rem:2}, it follows that
\begin{equation}\label{eq:9}
Q(n,r)=\begin{cases}
C_{r-1}\text{, if }n=0\\
\frac{n}{2(n+r)}\binom{2n}{n}\binom{2r}{r}\text{, if } n>0. 
\end{cases}
\end{equation}

Let $n$ and $r$ be positive integers. Then the following formulas are true:
\begin{align}
\frac{1}{2}\binom{2n+2r}{n+r}-P(n,r)&=\sum_{k=1}^{n}\binom{2(r+k-1)}{r+k-1}C_{n-k}\text{,}\label{eq:10}\\
\frac{1}{2}\binom{2n+2r}{n+r}-Q(n,r)&=\sum_{l=1}^{r}\binom{2(n+r-l)}{n+r-l}C_{l-1}\text{.}\label{eq:11}
\end{align}

The left side of Equation~(\ref{eq:10}) represents the number of all lattice paths in a plane with $(1,0)$ and $(0,1)$  steps from $(0,0)$ to $(n+r, n+r-1)$ whose intersection with the set $\{(x,x) \in \mathbb{Z}^2: r\leq x \leq n+r-1\}$ is non-empty. It is readily verified that there are  $\binom{2(r+k-1)}{r+k-1}C_{n-k}$ lattice paths  in a plane with $(1,0)$ and $(0,1)$  steps from $(0,0)$ to $(n+r, n+r-1)$ whose last point of intersection with the set  $\{(x,x) \in \mathbb{Z}^2: r\leq x \leq n+r-1\}$ is the point $(r+k-1,r+k-1)$. Here, $1\leq k\leq n$.

Similarly, the left side of Equation~(\ref{eq:11}) represents the number of all lattice paths in a plane with $(1,0)$ and $(0,1)$  steps from $(0,0)$ to $(n+r, n+r-1)$ whose intersection with the set $\{(x,x) \in \mathbb{Z}^2: 1\leq x \leq r\}$ is non-empty. It is readily verified that there are $\binom{2(n+r-l)}{n+r-l}C_{l-1}$ lattice paths  in a plane with $(1,0)$ and $(0,1)$  steps from $(0,0)$ to $(n+r, n+r-1)$ whose first point of intersection with the set  $\{(x,x) \in \mathbb{Z}^2: 1\leq x \leq r\}$, after the point $(0,0)$, is the point $(l,l)$. Here, $1\leq l\leq r$.

Note that, by using Equations~(\ref{eq:10}) and (\ref{eq:11}), one can give another proof of Theorem \ref{t:1}.

\begin{remark}\label{rem:3}
By using a combinatorial argument, Gessel proved \cite[Equation~(39)]{IG} the following formula:
\begin{equation}\label{eq:12}
\sum_{k=0}^{n}P(k,r)\binom{2n-2k}{n-k}=\frac{1}{2}\binom{2n+2r}{n+r}\text{,}
\end{equation}
where $n$ is a non-negative integer and $r$ is a positive integer.
It is known that Equation~(\ref{eq:12}) uniquely determines the numbers $P(n,r)$. Gessel used  Equation~(\ref{eq:12}) in order to prove Equation~(\ref{eq:1}).

Let $n$ and $r$ be positive integers.
By using Equation~(\ref{eq:12}) and Theorem \ref{t:1}, it can be proved that
\begin{equation}\label{eq:13}
\sum_{k=0}^{r-1}\binom{2k}{k}Q(n,r-k)=\frac{1}{2}\binom{2n+2r}{n+r}-\frac{1}{2}\binom{2n}{n}\binom{2r}{r}\text{.}
\end{equation}
Note that, for positive integers $n$, Equation~(\ref{eq:13}) uniquely determines the numbers $Q(n,r)$. 

\end{remark}

\section*{Acknowledgments}
I want to thank professor Du\v{s}ko Bogdani\'{c} for valuable comments which helped to improve the article.

\end{document}